\documentclass[12pt]{amsart}
\usepackage[dvips]{graphicx}
\usepackage{amssymb,amsmath,amsthm,cases}

%\usepackage[color]{showkeys}

%------    GENERAL MACROS    -----
%
% Standard rings and fields, affine and projective space
%
\def\NZQ{\mathbb}               % the font for N,Z,Q,R,C
\def\NN{{\NZQ N}}

\def\ZZ{{\NZQ Z}}

%
%------------------------------------------------
% Symbols in "Fraktur"
%
\def\frk{\mathfrak}               % font for "Fraktur"

\def\Phi{{\frk N}}
\def\Pc{{\mathcal P}}

%
%------------------------------------------------
% Small letters in bold
%

%---------------------------------------------------
% Greek letters

%
\def\opn#1#2{\def#1{\operatorname{#2}}} % to make operators
%------------------------------------------------
% Numerical invariants of rings, ideals, and modules
%
\opn\chara{char} \opn\length{\ell} \opn\pd{pd} \opn\rk{rk}
\opn\projdim{proj\,dim} \opn\injdim{inj\,dim} \opn\rank{rank}
\opn\depth{depth} \opn\grade{grade} \opn\height{height}
\opn\size{size}
\opn\embdim{emb\,dim} \opn\codim{codim}

\opn\Tr{Tr} \opn\bigrank{big\,rank}
\opn\superheight{superheight}\opn\lcm{lcm}
\opn\trdeg{tr\,deg}%\emph{
\opn\reg{reg} \opn\lreg{lreg} \opn\ini{in} \opn\lpd{lpd}
\opn\size{size}\opn{\mult}{mult}
\opn{\Cl}{Cl}
%------------------------------------------------
% Divisors
%
\opn\div{div} \opn\Div{Div} \opn\cl{cl} \opn\Cl{Cl}
%
%------------------------------------------------
% Subsets of the spectrum of a ring
%
\opn\Spec{Spec} \opn\Supp{Supp} \opn\supp{supp} \opn\Sing{Sing}
\opn\Ass{Ass} \opn\Min{Min} \opn\cl{cl}
%
%------------------------------------------------
% Standard operations on ideals and modules
%
\opn\Ann{Ann} \opn\Rad{Rad} \opn\Soc{Soc}
%
%------------------------------------------------
% Linear algebra and homology, endo- and automorphisms
%
\opn\Syz{Syz} \opn\Im{Im} \opn\Ker{Ker} \opn\Coker{Coker}
\opn\Am{Am} \opn\Hom{Hom} \opn\Tor{Tor} \opn\Ext{Ext}
\opn\End{End} \opn\Aut{Aut} \opn\id{id} \opn\ini{in}

\opn\nat{nat}
\opn\pff{pf}%   \pf exists already
\opn\Pf{Pf} \opn\GL{GL} \opn\SL{SL} \opn\mod{mod} \opn\ord{ord}
\opn\Gin{Gin}
\opn\Hilb{Hilb}\opn\adeg{adeg}\opn\std{std}\opn\ip{infpt}
\opn\Pol{Pol}
\opn\sat{sat}
\opn\Var{Var}
\opn\Gen{Gen}
\opn\lex{lex}
\opn\div{div}

%
%------------------------------------------------
% Convexity
%
\opn\aff{aff} \opn\con{conv} \opn\relint{relint} \opn\st{st}
\opn\lk{lk} \opn\cn{cn} \opn\core{core} \opn\vol{vol}
\opn\link{link} \opn\star{star}
%------------------------------------------------
% Graded rings and Rees algebras
\opn\gr{gr}

%Tonys commands

%
%------------------------------------------------
% Polynomials and power series
%

\def\pot#1#2{#1[\kern-0.28ex[#2]\kern-0.28ex]}

%
%------------------------------------------------
% Direct and inverse limits
%
\opn\dirlim{\underrightarrow{\lim}}
\opn\inivlim{\underleftarrow{\lim}}
%
%
% Names with a meaning
%

%
%------------------------------------------------
%

\def\Implies{\ifmmode\Longrightarrow \else
        \unskip${}\Longrightarrow{}$\ignorespaces\fi}
\def\implies{\ifmmode\Rightarrow \else
        \unskip${}\Rightarrow{}$\ignorespaces\fi}
\def\iff{\ifmmode\Longleftrightarrow \else
        \unskip${}\Longleftrightarrow{}$\ignorespaces\fi}

\let\:=\colon

\def\NZQ{\mathbb}        % the font for N,Z,Q,R,C
\def\NN{{\NZQ N}}

\def\ZZ{{\NZQ Z}}

%
%------------------------------------------------
% Symbols in "Fraktur"
%
\def\frk{\mathfrak}        % font for "Fraktur"

\def\Phi{{\frk N}}

% Small letters in bold
%

\def \P{{\mathcal P}}

\textwidth=15cm \textheight=22cm \topmargin=0.5cm
\oddsidemargin=0.5cm \evensidemargin=0.5cm 
\pagestyle{plain}

\newtheorem{Theorem}{Theorem}[section]

\theoremstyle{definition}

\newtheorem{Example}[Theorem]{Example}

\begin{document}
\title{Toric rings of nonsimple polyominoes}
\author {Akihiro Shikama}

\thanks{}

\subjclass{13C05, 05E40.}
\keywords{polyominoes, toric ideals}

\address{Akihiro Shikama, Department of Pure and Applied Mathematics, Graduate School of Information Science and Technology,
Osaka University, Toyonaka, Osaka 560-0043, Japan}
\email{a-shikama@cr.math.sci.osaka-u.ac.jp}

\maketitle
\begin{abstract}
It is known that toric ring of a simple polyomino is ring homomorphic to a edge ring of a weakly chordal bipartite graph.  
In this paper we identify the attached toric rings of nonsimple polyominoes which are of the form ``rectangle minus rectangle''.
\end{abstract}

\section*{Introduction}
Polyominoes are two dimensional objects which are originally 
rooted in recreational mathematics and combinatorics. They have been widely discussed in connection with tiling problems of the plane.
Typically, a polyomino is plane figure obtained by joining squares of equal sizes, which are known as cells. In connection with commutative algebra, polyominoes are first discussed in \cite{Q} by assigning each polyomino the ideal of inner 2-minors or the {\it polyomino ideal}.
The study of ideal of $t$-minors of an  $m \times n$
matrix is a classical subject in commutative algebra.
The class of polyomino ideal widely generalizes the class of ideals of 2-minors of $m \times n$ matrix as well ass the ideal of inner 2-minors attached to a two sided ladder.

Let $\P$ be a polyomino and $K$ be a field.
We denote by $I_\P$, the polyomino ideal attatched to $\P$,
in a suitable polynomial ring over $K$.
The residue class ring defined by $I_\P$ denoted by $K[\P]$.
It is natural to investigate the algebraic properties of $I_\P$
depending on shape of $\P$.
The classes of polyominoes whose polyomino ideal is 
prime has been discussed in many papers.
In \cite{Q} it is shown that polyomino ideal of convex polyominoes are prime. 
In \cite{HQS} they introduced the class of {\em balanced}
polyominoes and proved that polyomino ideal of balanced polyominoes are prime.
Later in \cite{HM} it is shown that simple polyominoes are balanced.
In \cite{QSS}, independently from \cite{HM}, 
it is proved that polyomino ideal of simple polyominoes are prime
by identifying them with toric rings of edge rings of graphs.  
Very recently, in \cite{HQ} it is shown that polyomino ideal of polyomino of the shape ``rectangle minus convex'' is prime
by using localization argument on the attached rings of those polyominoses.

In this paper, we identify the toric rings of polyomino ideals for the special class of nonsimple polyominoes, ``rectangle minus rectangle''.

\section{Toric rings of nonsimple polyominoes}
First we recall some definitions and notation from \cite{Q}.  Given $a=(i,j)$ and $b=(k,l)$ in $\NN^2$ we write  $a\leq b$ if $i\leq k$ and $j\leq l$. The set $[a,b]=\{c\in\NN^2\:\; a\leq c\leq b\}$ is called an {\em interval}. If $i <k$ and $j<l$, then the elements $a$ and $b$ are called {\em diagonal} corners and $(i,l)$ and $(k,j)$ are called {\em anti-diagonal}
 corners of $[a,b]$. An interval of the from $C=[a,a+(1,1)]$ is called a {\em cell} (with left lower corner $a$). The elements (corners) $a, a+(0,1), a+(1,0), a+(1,1)$ of $[a,a+(1,1)]$ are called the {\em vertices} of $C$. The sets $\{a,a+(1,0)\}, \{a,a+(0,1)\},  \{a+(1,0),  a+(1,1)\}$ and   $\{a+(0,1),  a+(1,1)\}$ are called the {\em edges} of $C$. We denote the set of edge of $C$ by $E(C)$.

Let $\Pc$ be a finite collection of cells of $\NN^2$. The the vertex set of $\Pc$, denoted by $V(\Pc)$ is given by $V(\Pc)=\bigcup_{C \in \Pc} V(C)$.  
For each vertex $v$ in $V(\P)$, we write $v = (v_1,v_2)$ where $v_1$ is the first and $v_2$ is the second coodinate of $v$.
The edge set of $\Pc$, denoted by $E(\Pc)$ is given by $E(\Pc)=\bigcup_{C \in \Pc} E(C)$. Let $C$ and $D$ be two cells of $\Pc$. Then $C$ and $D$ are said to be {\em connected}, if there is a sequence of cells $\mathcal{C}:C= C_1, \ldots, C_m =D$ of $\Pc$  such that $C_i \cap C_{i+1}$ is an edge of $C_i$ for $i=1, \ldots, m-1$. If in addition, $C_i \neq C_j$ for all $i \neq j$, then $\mathcal{C}$ is called a {\em path} (connecting $C$ and $D$). The collection of cells $\Pc$ is called a {\em polyomino} if any two cells of  $\Pc$ are connected, see Figure~\ref{polyomino}.

 \begin{figure}[htbp]
\includegraphics[width = 3cm]{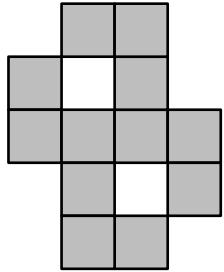}
\caption{polyomino}\label{polyomino}
\end{figure}

Let $\Pc$ be a polyomino, and let $K$ be a field. We denote by $S$ the polynomial ring over $K$ with variables $x_v$ with $v\in V(\Pc)$. We write for $v = (i,j)$, we may write $x_v = x_{ij}$ if needed.
Following \cite{Q} a $2$-minor $x_{ij}x_{kl}-x_{il}x_{kj}\in S$ is called an {\em inner minor} of $\Pc$ if all the cells $[(r,s),(r+1,s+1)]$ with $i\leq r\leq k-1$ and $j\leq s\leq l-1$ belong to $\Pc$. In that case the interval $[(i,j),(k,l)]$ is called an {\em inner interval} of $\Pc$. The ideal $I_\Pc\subset S$ generated by all inner minors of $\Pc$ is called the {\em polyomino ideal} of $\Pc$. We also set $K[\Pc]=S/I_\Pc$.

%Let $A$ and $B$ be cells of $\NN^2$
%for which $(i, j)$ is the lower left corner of $A$ and
%$(k, l)$ is the lower left corner of $B$. If $i \le k$ and $j \le l$, then the cell interval of $A$
%and $B$ is the set $ [A, B]$ which consists of those cells $E$ of $\NN^2$
%whose lower left corner
%$(r, s)$ satisfies $i \le r \le k$ and $j \le s \le l$.

%Let $\P$ be a finite collection of cells of $\NN^2$.
%We call $\P$ row convex if the horizontal
%cell interval $[A, B]$ is contained in $\P$ for any cells $A$ and $B$ of $P$ whose lower left
%corners are in horizontal position. Similarly one can define column convex. We call
%$\P$ convex if it is row convex and column convex.

For each interval $[a,b]$ we regard the polyomino $\P_{(a,b)}$ in the obvious way.
We give Theorem~\ref{boxbox} that identifies a toric ring for
some specified cases of nonsimple polyominoes which is obtained by subtracting a rectangle from a bigger rectangle.

Hereafter let $\P$ be a  polyomino with $\P = \P_{(a,b)} \setminus \P_{(a'b')}$ with $(a<a'<b'<b)$. 
We define a map $\mu : V(\P) \rightarrow \ZZ$
as follows.
For $v =(v_1,v_2)$ in $\P$,
\[
\mu(v) = \begin{cases}
1~(for~a_1 \le v_1 \le a'_1, a_2 \le  v_2  \le a'_2 ) \\
1~(for~b'_1 \le v_1\le b_1, b'_2 \le v_2 \le b_2)\\
2~(for~a_1 \le v_1 \le a'_1,b'_2 \le v_2 \le b_2)\\
2~(for~b'_1 \le v_1 \le b_1, a_2 \le v_2 \le a'_2)\\
v_1 - a'_1 + 2 ~(for~a'_1< v_1 < b'_1, a_2 \le v_2\le b'_2) \\
v_2 - a'_2 + (b'_1 - a'_1) + 1 \\
~~(for~b'_1 \le v_1 \le b_1, a'_2 < v_2 < b'_2)\\
b'_1- v_1 + (b'_1 -a'_1) + (b'_2- a'_2) \\
~~~(for~a'_1 < v_1 < b'_1, b'_2 \le v_2 \le b_2)\\
b'_2 - v_2 + 2(b'_1-a'_1) + (b'_2-a'_2) -1\\
~~~(for~a_1 \le v_1 \le a'_1, a'_2 < v_2 < b'_2).
\end{cases}
\]
We call $\mu(v)$ the {\em labelling} of $v$.
\begin{Example}\label{ex}
for $a =(1,1)$ $b = (7,5)$, $a' = (2,2)$ and $b' = (5,4)$,
the labelling of vertices are given as follows.
\begin{figure}[htbp]
\includegraphics[width = 4.5cm]{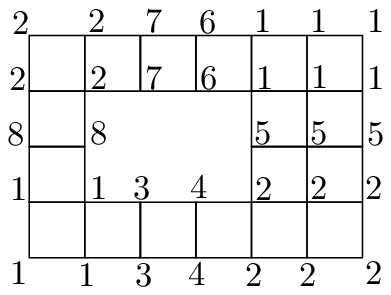}
\caption{labelling of $\P$}\label{label}
\end{figure}

We put 1 or 2 for each corners and
$3,4,\ldots $ for the other vertices anti-clockwisely.  
\end{Example}

Let $T = K[r_{v_1}s_{v_2} t_{\mu(v)} \mid v \in  V(\P) ]$ be the subring of a polynomial ring \\
$K[r_{a_1} ,\ldots r_{b_1}, 
s_{a_2}, \ldots s_{b_2}, t_1 \ldots t_M ]
$
where $M = \max \{\mu(v) \mid v \in V(\P)\}.$
We define the surjective ring homomorphism 
$\varphi : K[\P] \rightarrow T$
by setting $\varphi(x_v) = r_{v_1} s_{v_2}t_{\mu(v)}$.
For the sake of convenience, we write $\varphi(v) = r_{v_1}s_{v_2}t_{\mu(v)} = r_vs_vt_v$ in case it does not make readers confused.
We call the {\em toric ideal} $J_\P = {\text ker} \varphi$

\begin{Theorem}\label{boxbox}
Let $\P = \P_{(a,b)} \setminus \P_{(a'b')}$ with $(a<a'<b'<b)$. 
Then $I_\P = J_\P$. 
\end{Theorem}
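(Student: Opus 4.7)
The plan is to establish the theorem in two stages: first verify the inclusion $I_\P\subseteq J_\P$ on generators, then use primality of $I_\P$ together with a Krull-dimension count to conclude the reverse inclusion $J_\P\subseteq I_\P$.

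For the first stage, let $f=x_{ij}x_{kl}-x_{il}x_{kj}$ be the inner $2$-minor attached to an inner interval $[(i,j),(k,l)]$. Since both monomials in $f$ share the $r$- and $s$-part $r_ir_ks_js_l$ under $\varphi$, the condition $\varphi(f)=0$ is equivalent to
\[
\{\mu(i,j),\mu(k,l)\}=\{\mu(i,l),\mu(k,j)\}
\]
as multisets. Because every cell of $[(i,j),(k,l)]$ lies in $\P$, the rectangle of the inner interval is disjoint from the excised rectangle $\P_{(a',b')}$, so it sits in one of the four strips $\{v_1\le a'_1\}$, $\{v_1\ge b'_1\}$, $\{v_2\le a'_2\}$, $\{v_2\ge b'_2\}$. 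Inspecting the definition of $\mu$ on, say, $\{v_2\le a'_2\}$: the label is independent of $v_2$, equalling $1$ on the bottom-left corner block, $v_1-a'_1+2$ in the middle, and $2$ on the bottom-right corner block. Hence $\mu(i,j)=\mu(i,l)$ and $\mu(k,j)=\mu(k,l)$, which yields the multiset identity; the other three strips are symmetric.

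For the second stage, since a rectangle is convex, the main result of \cite{HQ} implies that $I_\P$ is prime, so $K[\P]$ is an integral domain. The first stage upgrades $\varphi$ to a surjection of domains $\bar\varphi\colon K[\P]\twoheadrightarrow T$; to conclude it is an isomorphism it suffices to show $\dim K[\P]=\dim T$, for then $\ker\bar\varphi$ is a height-zero prime in a domain and hence vanishes. The dimension of $T$ equals the rank of the $\ZZ$-lattice generated by the exponent vectors of the monomials $r_{v_1}s_{v_2}t_{\mu(v)}$, $v\in V(\P)$, which can be read off from the piecewise formula for $\mu$. To get the matching bound $\dim K[\P]\le\dim T$, I would localize at the class of $x_{v_0}$ for a chosen corner vertex $v_0$, and use the inner $2$-minor relations to express each class $\bar x_v$ in $K[\P][\bar x_{v_0}^{-1}]$ as a Laurent monomial in a small family of auxiliary elements that play the roles of the generators $r_i$, $s_j$, $t_n$ of $T$.

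The principal obstacle is the bookkeeping of the $t$-labels in this dimension check. For simple polyominoes the $t$-variables are unnecessary and one recovers the bipartite-graph edge-ring picture of \cite{QSS}; here the hole obstructs a single coherent choice of ``row'' and ``column'' representatives in the fraction field of $K[\P]$, and the $t$-labels are introduced to encode the remaining monodromy around the hole. The crux is to verify that the inner $2$-minors living inside each of the four strips---which are all the inner $2$-minors available---generate exactly the relations needed to match up the $t$-coordinates consistently, and this is precisely what the piecewise definition of $\mu$ is engineered to ensure.
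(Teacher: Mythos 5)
Your first stage is correct and in fact more explicit than what the paper offers for the inclusion $I_\P\subseteq J_\P$: the separation of an inner interval from the excised rectangle into one of the four strips, together with the observation that $\mu$ depends on only one coordinate within each strip, is exactly the right verification (the paper dismisses this direction with ``it is easy to see'').

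The gap is in the second stage, which is where the real content of the theorem lies. Your strategy --- $I_\P$ is prime by \cite{HQ}, $J_\P$ is prime as a toric ideal, $I_\P\subseteq J_\P$, and two nested primes in an affine algebra whose quotients have equal Krull dimension must coincide --- is logically sound, but you do not execute either half of the dimension count: you neither compute the rank of the exponent lattice of $T$ nor produce the Laurent-monomial parametrization of $K[\P][\bar x_{v_0}^{-1}]$ that would give $\dim K[\P]\le\dim T$. The localization step is precisely where the hole causes trouble (as you note yourself), and asserting that the $t$-labels are ``engineered to ensure'' the consistency of the row and column representatives is a restatement of the theorem, not a proof of it. As written, the reverse inclusion $J_\P\subseteq I_\P$ is not established. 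For comparison, the paper proves this inclusion directly and self-containedly: it first checks that every quadratic binomial in $J_\P$ is itself an inner $2$-minor (if the associated interval were not inner, the labelling would force the $t$-parts of the two monomials to differ), and then runs an induction on degree, explicitly rewriting any higher-degree binomial $f^{(+)}-f^{(-)}\in\ker\varphi$ as a combination of inner $2$-minors plus a kernel element of smaller degree, with a case split according to whether some vertex carries a label outside $\{1,2\}$. That route needs no input from \cite{HQ} and in particular re-derives primality of $I_\P$ as a corollary, whereas your route consumes primality as a hypothesis. To complete your proof you must actually carry out the two dimension computations; until then the theorem is only proved in one direction.
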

\begin{proof}

First it is easy to see that $I_\P \subset$ ker$\varphi$.
We show that every quadratic binomials belong to $J_\P$ is a generator of $I_\P$, namely an inner 2-minor in $\P$ and every binomials of higher degree belongs to $I_\P$.

First, suppose that $ x_{v_1}x_{v_2} -x_{v_3}x_{v_4}$ is a quadratic binomial belongs to $J_\P$ with $x_{v_1}x_{v_2}\neq  x_{v_3}x_{v_4}$. 
Then either $(r_{v_1}, r_{v_2}, s_{v_1} s_{v_2}) = (r_{v_3}, r_{v_4}, s_{v_4} s_{v_3})$ or $(r_{v_1}, r_{v_2}, s_{v_1} s_{v_2}) = (r_{v_4}, r_{v_3}, s_{v_3} s_{v_4})$ holds. 
Thus, the quadratic binomial $ x_{v_1}x_{v_2} -x_{v_3}x_{v_4} $ is associated with an interval.
Now we show that the interval is an inner interval in $\P$.
Assume that the interval spanned by $x_{v_1}x_{v_2}-x_{v_3}x_{v_4}$  has $v_1$ and $v_2$ as diagonal corners. 
Also, we may assume that$r_{v_1} = r_{v_3}$ and 
 $s_{v_1}=s_{v_4}$ hold.

Suppose $[v_1,v_2]$ is not an inner interval in $\P$. 
Then $\P_{[a',b']} \subset \P_{[v_1,v_2]}$ can not be happened because 
otherwise, from our definition of $\mu$, one has $t_1 ^2=t_{
v_1}t_{v_2}  \neq t_{v_3}t_{v_4} = t_2^2$.
Therefore we may assume that one of $[v_1,v_3]$, $[v_1,v_4]$, $[v_3, v_2]$ or $[v_4, v_2]$ is not an inner interval in $\P$. 
Then from our definition of labelling we can not have $t_{v_1}t_{v_2}  = t_{v_3}t_{v_4}$.
Hence every degree 2 binomial in $J_\P$ is a generator of $I_\P$.

Now we show that each binomial of higher degree belongs to $I_\P$.
Let $f = f^{(+)}- f^{(-)}  \in$ ker$\varphi$ be a homogeneous binomial. Let $V_+= {v_1}\ldots {v_s} $ be the set of vertices such that  $x_{v_i}$ appear in $f^{(+)}$ and  $V_-= {u_1}\ldots x_{u_t}$ the vertices appear in $f^{(-)}$. 

We may assume that $V_+ \cap V_- = \emptyset$, otherwise, say if $v_1 = u_1$ holds, we have $f = x_{v_1} (f^{(+)}/x_{v_1} - f^{(-)}/x_{u_1})$
and we can check $f' =  f^{(+)}/x_{v_1} - f^{(-)}/x_{u_1}$ instead of $f$ to complete the proof.

%If there exist an inner interval such that
%both of whose diagonal corners belongs to $V_+$ and 
%one of whose diagonal corners belongs to $V_-$, then 
%the inner minor of this inner interval will reduce 
%the degree of $f$ and our argument is done by induction.

%We say a vertex of $\P$ is in corner position if whose labelling is 1 or 2.
%Suppose that there exist a vertex$ u_1\in V_-$ which is not in corner position.
%Assume that 

Now we show that if we have $v \in V_+\cup V_-$ such that $\mu(v) \notin \{1,2\}$, then we are done in this case.
Assume that we have a vertex $u_1$ in $V_-$ such that 
$u_{1_1} \le a'_1 $ and $\mu(v) \neq 1,2$. 
Since $f$ belongs to the kernel, we have 2 vertices, say 
$v_1$ and $v_2$ in $V_+$ such that $s_{v_1}t_{v_1}| \varphi (x_{u_1})$ and
$r_{v_1} | \varphi (x_{{u_1}}) $.
Let $c$ be the rest corner of the interval spanned by $v_1$, $v_2$ and $u_1$. Our rule of labelling show that the interval is an inner interval in $\P$
See Figure~\ref{fig:side} to see the situation.
\begin{figure}[htbp]
\includegraphics[width = 4.5cm]{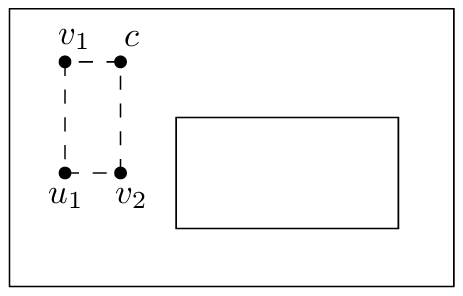}
\caption{}\label{fig:side}
\end{figure}
We obtain 
\begin{align*}\
f &= f^{(+)}-f^{(-)}\\
&= x_{v_1}x_{v_2}\frac{f^{(+)}}{x_{v_1}x_{v_2}}  -x_c\frac{{f^{(-)}}}{x_c}\\
&= (x_{v_1}x_{v_2}-x_{u_1}x_c)\frac{f^{(+)}}{x_{v_1}x_{v_2}}  + x_{u_1}x_c\frac{f^{(+)}}{x_{v_1}x_{v_2}}  -x_c\frac{{f^{(-)}}}{x_c}\\
&= (x_{v_1}x_{v_2}-x_{u_1}x_c)\frac{f^{(+)}}{x_{v_1}x_{v_2}}  + x_c\left( x_{u_1}\frac{f^{(+)}}{x_{v_1}x_{v_2}}  -\frac{{f^{(-)}}}{x_c} \right)
\end{align*}
Since $x_{v_1}x_{v_2}-x_{u_1}x_c$ is an inner minor of $\P$, we have
$x_{u_1}({f^{(+)}}/{x_{v_1}x_{v_2}})  - ({{f^{(-)}}}/{x_c}) \in J_\P$, 
we can apply induction on the degree of $f$ to complete the proof.

Now we assume every vertex in $V_{(+)} $ and $V_{(-)}$ are labelled 1 or 2.
For  $u_1  \in V_{(-)}$, we can find a vertex $v_1$ and $v_2$ in $V_{+} $ where $r_{u_1}  = r_{v_1}$ and $s_{u_1} = s_{v_2}$.
If $t_{u_1} = t_{v_1}$ or $t_{u_1} = t_{v_2}$, then using the same formula as above, we can apply induction.
Now we assume that $t_{u_1} = t_1$ and $t_{v_1} = t_{v_2} = t_2$.
By using similar argument, one has $u_2 \in V_-$ such that $s_{u_2} =s_{v_1}$.
Since $\varphi(f^{(+)}) = \varphi(f^{(-)})$ holds, we have a vertex  $u_2 \in V_-$  such that $t_{u_2} = t_1$.
See Figure~\ref{situation} for this situation.
\begin{figure}[htbp]
\includegraphics[width = 4.5cm]{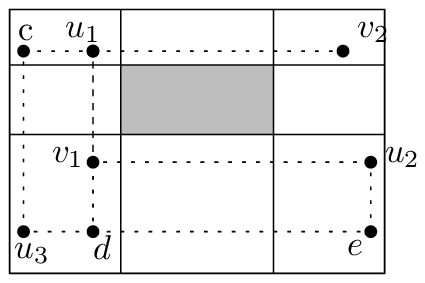}
\caption{}\label{situation}
\end{figure}
Then the interval spanned by $u_1$ and $u_3$  is an inner minor of $\P$.
Let $c,d$ be the other corners of this interval.
Then, 
\begin{align*}
f &= f^{(+)}- f^{(-)}\\
&= f^{(+)} - x_{u_1}x_{u_3}\frac{f^{(-)}}{x_{u_1}x_{u_3}}\\
&= f^{(+)}-x_cx_d\frac{f^{(-)}}{x_{u_1}x_{u_3}} - (x_{u_1}x_{u_3}-x_cx_d)\frac{f^{(-)}}{x_{u_1}x_{u_3}} 
\end{align*}
Notice that one of $t_{c} =t_1$ or $t_{d}=t_1$ holds.
Assume $t_{d} = t_1$. 
Then the interval spanned by $u_2$ and $d$ has $v_1$ as a corner vertex.
Suppose that $e$ is the rest corner vertex of this interval.
Then, we have
\begin{align*}
f =& f^{(+)}-x_cx_d\frac{f^{(-)}}{x_{u_1}x_{u_3}} - (x_{u_1}x_{u_3}-x_cx_d)\frac{f^{(-)}}{x_{u_1}x_{u_3}} \\
=& x_{v_1}\frac{f^{(+)}}{x_{v_1}}-x_cx_dx_{u_2}\frac{f^{(-)}}{x_{u_1}x_{u_2}x_{u_3}} - (x_{u_1}x_{u_3}-x_cx_d)\frac{f^{(-)}}{x_{u_1}x_{u_3}} \\
=& x_{v_1}\frac{f^{(+)}}{x_{v_1}}-x_{v_1}x_cx_e\frac{f^{(-)}}{x_{u_1}x_{u_2}x_{u_3}}-
\left(x_dx_{u_2}-x_{v_1}x_e\right)\left(x_c\frac{f^{(-)}}{x_{u_1}x_{u_2}x_{u_3}}\right)
\\
&- (x_{u_1}x_{u_3}-x_cx_d)\frac{f^{(-)}}{x_{u_1}x_{u_3}}\\
=& x_{v_1}\left(\frac{f^{(+)}}{x_{v_1}}-x_cx_e\frac{f^{(-)}}{x_{u_1}x_{u_2}x_{u_3}}\right)-
\left(x_dx_{u_2}-x_{v_1}x_e\right)\left(x_c\frac{f^{(-)}}{x_{u_1}x_{u_2}x_{u_3}}\right)
\\
&- (x_{u_1}x_{u_3}-x_cx_d)\frac{f^{(-)}}{x_{u_1}x_{u_3}}.
\end{align*}
Since $\left(x_dx_{u_2}-x_{v_1}x_e\right)$ and
$(x_{u_1}x_{u_3}-x_cx_d)$ are inner minors of $\P$ and since 
degree of ${f^{(+)}}/{x_{v_1}}-x_cx_e {f^{(-)}}/{x_{u_1}x_{u_2}x_{u_3}}$ 
is less than degree of $f$, we can apply induction on degree of $f$ to complete the proof.
\end{proof}

\begin{Example}
A toric ring of the polyomino given in Example~\ref{ex} is identified by Theorem \ref{boxbox}
as follows.

\[
S/I_\P \cong 
K 
\begin{bmatrix}
r_1s_5t_2&r_2s_5t_2&r_3s_5t_7&r_4s_5t_6&r_5s_5t_1&r_6s_5t_1&r_7s_5t_1\\
r_1s_4t_2&r_2s_4t_2&r_3s_4t_7&r_4s_4t_6&r_5s_4t_1&r_6s_4t_1&r_7s_4t_1\\
r_1s_3t_8&r_2s_3t_8&&&r_5s_3t_5&r_6s_3t_5&r_7s_3t_5\\
r_1s_2t_1&r_2s_2t_1&r_3s_2t_3&r_4s_2t_4&r_5s_2t_2&r_6s_3t_2&r_7s_2t_2\\
r_1s_1t_1&r_2s_1t_1&r_3s_1t_3&r_4s_1t_4&r_5s_1t_2&r_6s_2t_2&r_7s_1t_2\\
\end{bmatrix}.
\]
\end{Example}

In this paper we identify toric rings of polyominoes 
which are of the form ``rectangle minus rectangle''.
In general it is interesting but not so easy to identify 
toric ring for a given binomial prime ideal.
The wider class of polyominoes which can be discussed about
its toric ring is ``rectangle minus convex'',
because it is proved in \cite{HQ} that their polyomino ideals are prime. 
%Now we present a conjecture for the toric rings of such polyominoes.
%%We call a vertex of $\mathcal Q$ a {\em corner} if  
%Let $\P$ be a polyomino with $\P = \P_\I \setminus {\mathcal Q}$.

%\begin{Conjecture}
%Let $\P$ be a polyomino with $\P = \P_\I \setminus {\mathcal Q}$
%where $\mathcal Q$ is a convex polyomino.
%Then $S/I_\P \cong \text{ker} \pi$, where $\pi$ is stated as follows
%\end{Conjecture}

\end{document}